\newcommand{\authorname}{\textbf}
\newcommand{\authorgroup}{\emph}
\newcommand{\authoraddress}{\emph}
\newcommand{\authormail}{\emph}
\newtheorem{proposition}{Proposition}[section]
\titleformat*{\section}{\normalfont\normalsize\bfseries}
\titleformat*{\subsection}{\normalfont\normalsize\em}
\newcommand{\mathbbm}[1]{\text{\usefont{U}{bbm}{m}{n}#1}}
\newcommand\blfootnote[1]{%
  \begingroup
  \renewcommand\thefootnote{}\footnote{#1}%
  \addtocounter{footnote}{-1}%
  \endgroup
}
\title{\vspace{-5ex}\textbf{A Bilevel Approach to Optimal Price-Setting of Time-and-Level-of-Use Tariffs}}
\author{
	\authorname{Mathieu Besançon}\\
	\authorgroup{Department of Mathematics \& Industrial Eng., Polytechnique Montréal \& INRIA Lille \& Centrale Lille}\\
	\authoraddress{INRIA Lille, 40 avenue Halley, 59650 Villeneuve-d'Ascq, France}\\
	\authormail{Email: mathieu.besancon@polymtl.ca}\\\\
	\authorname{Miguel F. Anjos*}\\
	\authorgroup{School of Mathematics, University of Edinburgh}\\
	\authoraddress{James Clerk Maxwell Building, Peter Guthrie Tait Road, Edinburgh EH9 3FD, UK}\\
	\authormail{Email: anjos@stanfordalumni.org}\\\\
	\authorname{Luce Brotcorne}\\
	\authorgroup{INOCS, INRIA Lille}\\
	\authoraddress{INRIA Lille, 40 avenue Halley, 59650 Villeneuve-d'Ascq, France}\\
	\authormail{Email: luce.brotcorne@inria.fr}\\\\
	\authorname{Juan A. Gomez-Herrera}\\
	\authorgroup{GERAD \& Department of Mathematics \& Industrial Eng., Polytechnique Montréal}\\
	\authoraddress{2900 Boulevard Edouard-Montpetit, Montréal, QC H3T1J4, Canada}\\
	\authormail{Email: juan.gomez@polymtl.ca}
}
\date{}
\begin{document}


\twocolumn[

\maketitle

\thispagestyle{fancy} 

\begin{center}
\line(1,0){465}
\end{center}

\begin{@twocolumnfalse}

\textbf{Summary}
\\

Time-and-Level-of-Use (TLOU) is a recently proposed pricing policy for energy,
extending Time-of-Use with the addition of a capacity that users can book for a given time frame,
reducing their expected energy cost if they respect this self-determined capacity limit.
We introduce a variant of the TLOU defined in the literature, aligned with the
supplier interest to prevent unplanned over-consumption.
The optimal price-setting problem of TLOU is defined as a bilevel, bi-objective
problem anticipating user choices in the supplier decision. An efficient
resolution scheme is developed, based on the specific discrete structure of the
lower-level user problem. Computational experiments using consumption
distributions estimated from historical data illustrate the effectiveness
of the proposed framework.

\emph{\textbf{Keywords:} Demand response, electricity pricing, bilevel optimization, Time-and-Level-of-Use}
\begin{center}
\line(1,0){465}
\end{center}

\end{@twocolumnfalse}

]


\section{Introduction}
\blfootnote{This extended abstract was presented at, and included in the program of, the 13th EUROGEN conference on Evolutionary and Deterministic Computing for Industrial Applications.}
The increasing penetration of wind and solar energy has marked the last decades,
bringing a decentralization and higher stochasticity of power generation,
yielding new challenges for the operation of electrical grids.
Advances in communication technologies have enabled both scheduling of smart
appliances and seamless data collection and exchange between different entities
operating on power networks, from generators to end-consumers.
Using such capabilities, agents can make decisions based on probability
distributions of different variables, possibly conditioned on known external
factors, such as the weather. \\

As a promising lead to tackle these challenges, Demand Response (DR) has
attracted an increasing interest in the past decades. Instead of compensating
ever-increasing fluctuations of renewable production with conventional
power generation, this approach consists in leveraging the flexibility of some
consuming units by reducing or shifting their load\cite{Albadi2008}.
The Time-and-Level-of-Use system presented in this paper is primarily a
price-based Demand Response program based on the classification found in the
literature\cite{Albadi2008}, although the self-determined consumption limit
creates an incentive for respecting the commitment on the part of the user.
Its design makes opting out of the program a natural extension
and thus requires less coupling and interaction between users and suppliers to
work. In a 2012 report, the US Federal Energy Regulatory Commission identified
the lack of short-term estimation methods as one of the critical barriers to the
effective implementation of Demand Response \cite{ferc2012}.
In a related approach\cite{vuelvas18game}, an incentive-based Demand Response
program is developed with users signaling both a predicted consumption
and a reduction capacity, with the supplier selecting reductions randomly.
Both the program developed by the authors\cite{vuelvas18game} and the TLOU policy
require a signal sent from the user to the supplier.
\\

Bilevel optimization has been used to model and tackle optimization problems
in energy networks \cite{dempe15}. Applications on power systems and markets are
also mentioned in several reviews on bilevel optimization \cite{Colson2007,shi-xia}.
It allows decision-makers to encapsulate utility-generators, user-utility
interactions \cite{generation1,generation2}, or define robust
formulations of unit commitment and optimal power flow \cite{robusttri}.
Some recent work focuses on bilevel optimization for demand-response in the
real-time market \cite{dr-realtime}.
Multiple supplier settings are also considered, for instance in the
determination of Time-of-Use pricing policies \cite{customizing18},
where competing retailers target residential users.

The Time-and-Level-of-Use (TLOU) pricing scheme \cite{gomez17} was built as an
extension of Time-of-Use (TOU), targeting specifically the issue
with current large-scale Demand-Response programs identified in the FERC report\cite{ferc2012}.
The optimal planning and operation of a
smart building under this scheme was developed, taking the TLOU settings as
input of the decision. In this work, we consider the perspective of the supplier
determining the optimal parameters of TLOU.\\

We integrate the optimal user reaction in the supplier decision problem, thus
modelling the interaction as a Stackelberg game solved as a bilevel
optimization problem. The specific structure of the lower-level decision is
leveraged to reduce the set of possibly optimal solutions to a finite enumerable
set. Through this reduction, the lower-level optimality conditions are translated
into a set of linear constraints. The set of optimal pricing configurations is
derived for different distributions corresponding to different time frames.
These options can be computed in advance and then one is chosen by the supplier ahead
of the consumption time to create an incentive to book and consume a given
capacity. We show the effectiveness of the method with discrete probability
distributions computed from historical consumption data.

\section{Time-and-Level-of-Use pricing}

The TLOU policy was initially developed from the perspective of smart consumption
units\cite{gomez17} which can be a building, an apartment or a micro-grid monitoring
its consumption and equipped with programmable consuming devices.
It is a pricing model for energy built upon the Time-of-Use
(TOU) implemented by several jurisdictions and for which the energy price
is fixed by intervals throughout the day. TLOU extends TOU by allowing users to
self-determine and book an energy capacity at each time frame
depending on their planned requirements; and by doing so, they provide the supplier
with information on the intended consumption.
We will refer to the capacity as the amount of energy booked by the user for a
given time frame, following the same terminology as the reference implementation\cite{gomez17}.
Energy prices still depend on the
time frame within the day, but also on the capacity booked by the user.
This pricing scheme is applied in a three-phase process:
\begin{enumerate}
	\item The supplier sends the pricing information to the user.
	\item The user books a capacity from the supplier for the time frame before a given deadline. If no capacity has been booked, the Time-of-Use pricing is used.
	\item After the time frame, the energy cost is computed depending on the energy consumed $x$ and booked capacity $c$:
	\begin{itemize}
		\item If $x \leq c$, then the applied price of energy is $\pi^L(c)$ and the energy cost is $\pi^L(c) \cdot x$.
		\item If $x > c$, then the applied price of energy is $\pi^H(c)$ and the energy cost is $\pi^H(c) \cdot x$.
	\end{itemize}
\end{enumerate}
In the model considered in this paper, only the first two steps involve decisions from the agents,
making the decision process a Stackelberg game given the sequentiality of these decisions.
The price structure is composed of three elements: a booking fee $K$,
a step-wise decreasing function $\pi^L(c)$ representing the lower energy price and
a step-wise increasing function $\pi^H(c)$ representing the higher energy price.
The steps of the lower and higher price functions are given at different breakpoints:
\begin{align*}
&\{c^L_0, c^L_1, c^L_2, ...\} = C^L &\&&\,\, \{\pi^L_0, \pi^L_1, \pi^L_2, ...\} = \pi^L \\
&\{c^H_0, c^H_1, c^H_2, ...\} = C^H &\&&\,\, \{\pi^H_0, \pi^H_1, \pi^H_2, ...\} = \pi^H
\end{align*}

\noindent
$\pi^L(c)$ will refer to the function of the capacity and $\pi^L_j$ to
the value of the lower price at step $j$. In the initial version of the pricing
\cite{gomez17}, the energy consumed above the capacity
is paid at the higher tariff while the rest is paid at the lower tariff.\\

Considering that most power systems try to prevent over-consumption and
unplanned excess consumption, we introduce a variant where the whole energy
consumed is paid at the lower tariff if it is less or equal to the capacity, and
at the higher tariff otherwise, see Equation (\ref{eq:tlouprice}).
In other words, if the
consumption over the time frame remains below the booked capacity, the effective
energy price is given by the lower tariff curve; if the consumption exceeds
the booked capacity, the energy price is given by the higher tariff.
The total cost for the user associated with a booked capacity $c$ and a
consumption $X_t$ for a time frame $t$ is:

\begin{equation}
\mathcal{C}(c_t; X_t) = \begin{cases} K \cdot c_t + \pi^L(c_t) \cdot X_t, & \mbox{if} \, X_t\ \leq \ c_t, \\
K \cdot c_t + \pi^H(c_t) \cdot X_t & \mbox{otherwise.}\end{cases}\label{eq:tlouprice}
\end{equation}


The load distribution $X_t$ is a random variable; both user and supplier make
decisions on the expected cost over the set of possible consumption levels
$\Omega$, given as:
\begin{equation}
\mathcal{C}(c_t) = \mathbb{E}_{\Omega}\left[\mathcal{C}(c_t; x_{\omega})\right]
\end{equation}
where $\mathbb{E}_{\Omega}$ is the expected value over the
support $\Omega$ of the probability distribution. \\

Furthermore, $\pi^L(c = 0) = \pi^H(c = 0) = \pi_0(t)$, with $\pi_0(t)$ the
Time-of-Use price at the time frame of interest $t$. This property allows users
to opt-out of the program for some time frames by simply not booking any capacity.
TLOU is designed for the day-ahead market, where both the pricing components and the capacity are
chosen ahead of the consumption time \cite{gomez17}. It can however be adjusted
to other markets \cite{germanpower14} or intra-day settings.
The entity defining the TLOU pricing can also extend the possible settings by
decoupling the time spans for one price setting choice from the time frames for
capacity booking. For instance, a price setting can be chosen by the utility
for the week, while the booking of capacity occurs the day before the
consumption. \\

TLOU offers the user the possibility to reduce their cost of energy by load
planning, and offers the supplier the prospect of improved load forecasting,
because under-consumption is paid by the excess booking cost while
over-consumption is paid by the difference between higher and lower tariffs.

We illustrate this phenomenon with
a supplier decision $(K,\pi^L,\pi^H)$ on Figures
\ref{fig:game1} and \ref{fig:game2} using the relative cost:
\begin{equation}
x \mapsto \frac{\mathcal{C}(c;x)}{x}
\end{equation}
for different values of $c$.
For $c > 0$, the fixed cost $K\cdot c$ makes low consumption levels
expensive per consumed unit, while the transition from lower to
higher price makes over-consumption more expensive than the baseline.
In the example, $c=1.5$ cannot be an optimal booked capacity
since it is always more expensive than the baseline for all
possible consumption levels.
The cases $c = 3.0$ and $c = 3.5$ both have ranges of consumption
for which this choice is optimal for the user;
these ranges always have the capacity as upper bound.
The difference in cost occurring at the transition from lower to higher
price increases with the booked capacity, illustrating the guarantee
against over-consumption the supplier gains from the capacity booked
by the user. Regardless of the ability to shift consumption or irrational
decision-making, the commitment to a capacity creates a cost difference if the
user is not making an optimal decision, which compensates the supplier for
these unexpected deviations. This financial incentive against a consumption
above the booked capacity creates a guarantee of interest to the supplier
and is cast as a second objective expressed in Section
\ref{sec:bilevel-model}.
\\

\begin{figure}[ht]
\begin{minipage}{0.49\textwidth}
    \centering
    \includegraphics[width=1.15\textwidth]{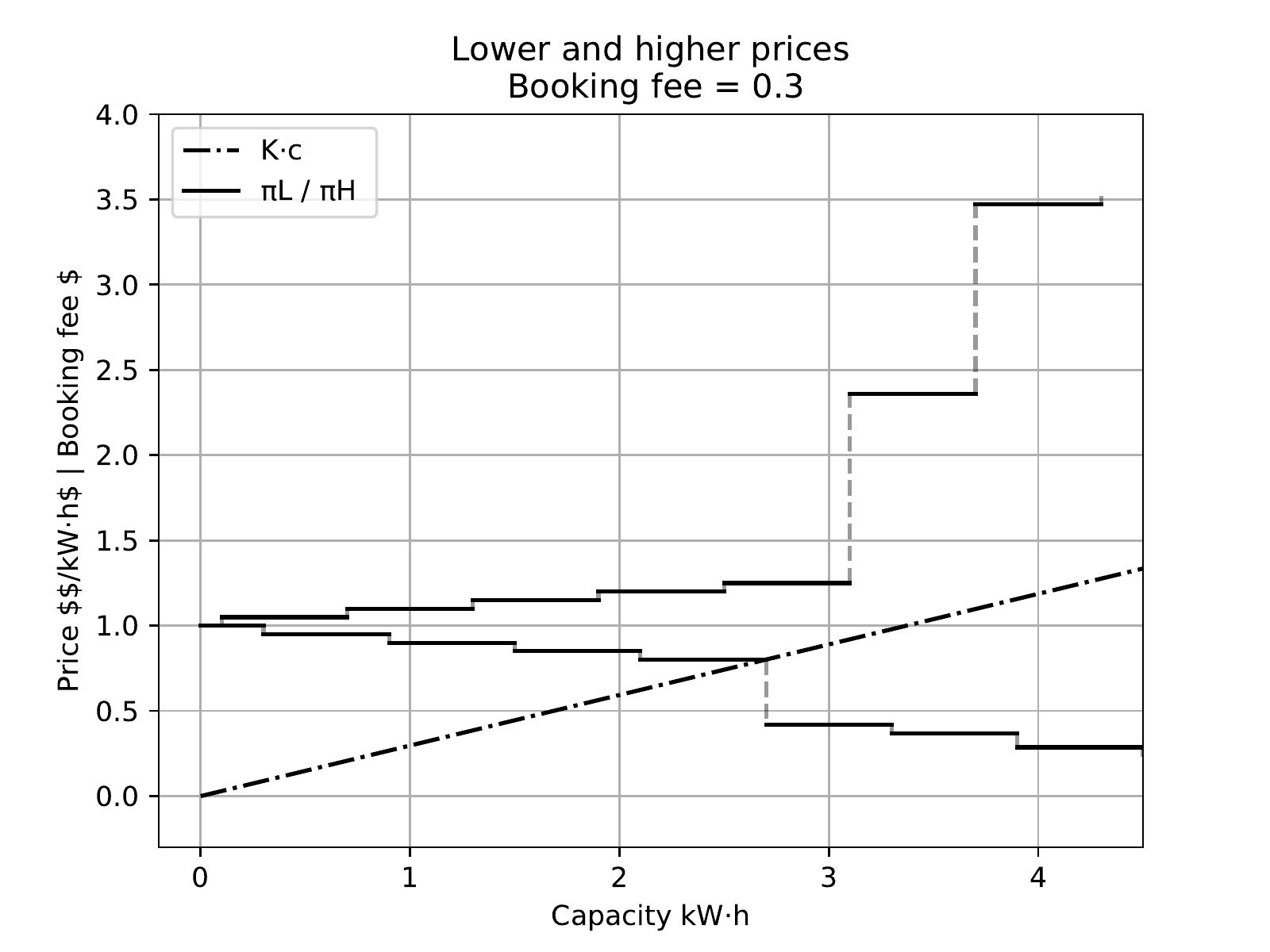}
\caption{Example of TLOU pricing}
\label{fig:game1}
\end{minipage}
\begin{minipage}{0.49\textwidth}
    \centering
    \includegraphics[width=1.15\textwidth]{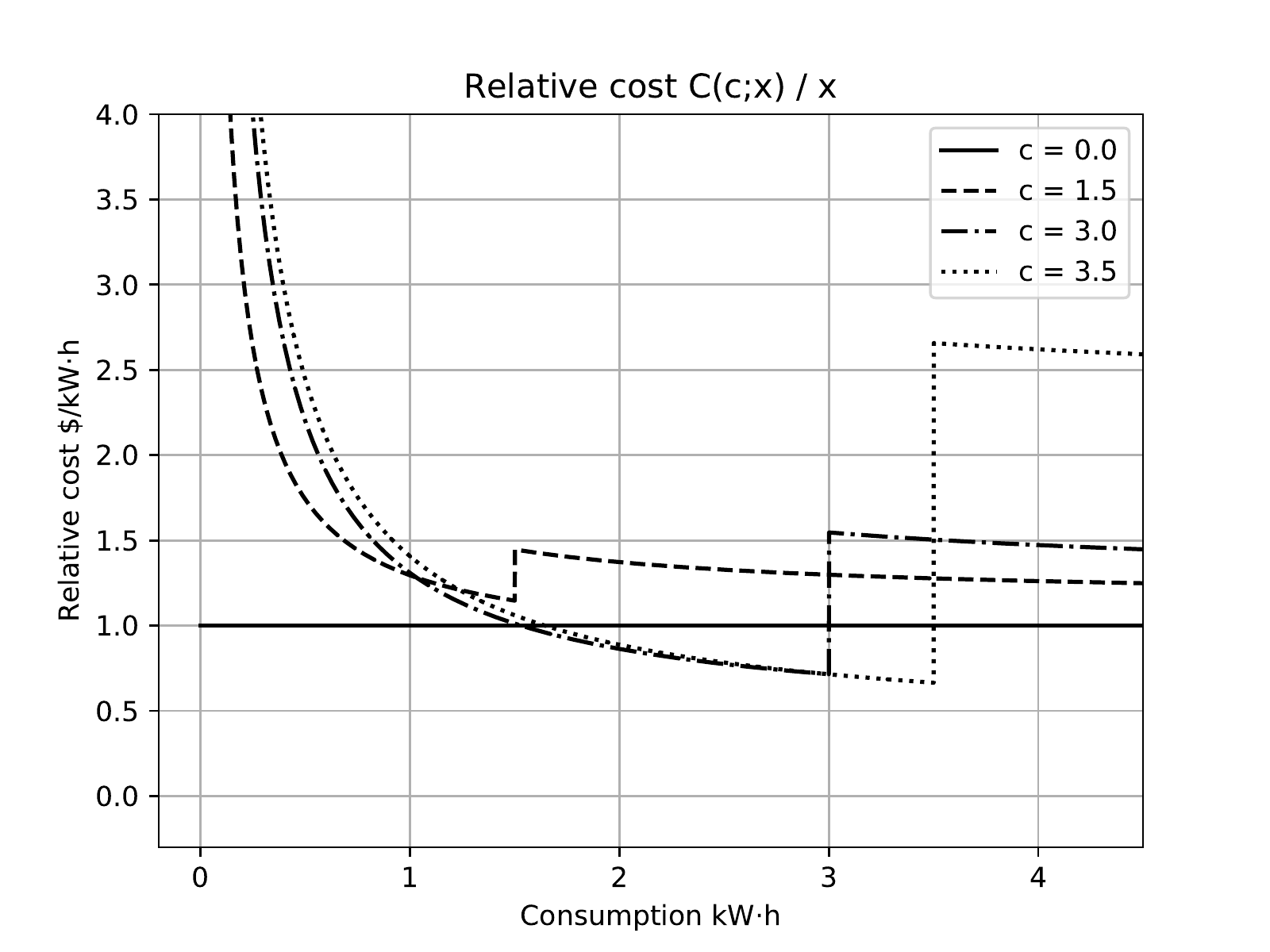}
    \caption{Relative cost of energy vs consumption for different capacities}\label{fig:game2}
\end{minipage}
\end{figure}

The supplier first builds their set of options based on prior consumption data.
In the proposed method, the prior discrete distribution used can be conditioned
on some independent variables if they are known and influence the consumption
(e.g. forecast external temperature or day of the week).
They can then pick a pricing setting for a given day based on generation-side
considerations and constraints, including the option to stay at a flat
Time-of-Use tariff for some or all time frames.

\section{Bilevel model of the supplier problem}\label{sec:bilevel-model}

The supplier wishes to determine an optimal set of pricing options at any
capacity level. In the model developed in this section, we consider a discrete
probability distribution with a finite support, derive some properties
of the cost structure which we then leverage to formulate the optimization
problem in a tractable form.\\

At any capacity level, the decision process of the supplier involves two
objectives, the expected revenue from the tariff and the guarantee of an
upper bound on the consumption. The expected revenue is given by:
\begin{equation}
\mathcal{C}(c) = K\cdot c + \sum_{\omega\in\Omega^-(c)} x_{\omega} p_{\omega} \pi^L(c) + \sum_{\omega\in\Omega^+(c)} x_{\omega} p_{\omega} \pi^H(c)
\end{equation}

\noindent
with any capacity booked defining a partition of the set of scenarios:
\begin{gather}
\Omega^-(c) = \{\omega \in \Omega, x_{\omega} \leq c\}\\
\Omega^+(c) = \{\omega \in \Omega, x_{\omega} > c\}
\end{gather}

The function $\mathcal{C}(c)$ is minimized by the user with respect to their decision $c$.
It is non-linear, non-smooth and discontinuous because of the partition of the
scenarios by $c$ and the transition between steps of the pricing curves
$\pi^L(c)$, $\pi^H(c)$. Both the user and supplier problems are thus
intractable with this initial formulation. Proposition \ref{prop:subset} shows
that only a discrete finite subset of capacity values are
candidates to optimality for the user.

\begin{proposition}\label{prop:subset}
The optimal booked capacity for a user at a time frame $t$
belongs to a discrete and finite set of capacities $S_t$, defined as:
\begin{equation}
\centering
S_t = \{0\} \cup C^L \cup \Omega_t
\end{equation}
with $\Omega$ the set of consumption scenarios.
\end{proposition}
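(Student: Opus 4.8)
The plan is to show that any capacity $c$ lying strictly between two consecutive "special" values (i.e., $c \notin \{0\} \cup C^L \cup \Omega_t$) is dominated by a nearby special value, so it can never be uniquely optimal. First I would fix a time frame $t$ and consider an arbitrary candidate $c > 0$ with $c \notin C^L \cup \Omega_t$. The key observation is that, on the open interval between consecutive elements of the sorted set $\{0\} \cup C^L \cup \Omega_t$, the partition $(\Omega^-(c), \Omega^+(c))$ is constant (since no scenario point $x_\omega$ is crossed) and both price functions $\pi^L(c)$ and $\pi^H(c)$ are constant (since no breakpoint in $C^L$, and correspondingly no relevant breakpoint in $C^H$, is crossed). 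Hence on such an interval $\mathcal{C}(c)$ reduces to an affine function of $c$ of the form $\mathcal{C}(c) = K\cdot c + (\text{const})$.

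Next I would exploit that an affine function on a half-open or open interval attains its infimum at an endpoint (or is constant). Since $K \geq 0$ (the booking fee is nonnegative), $\mathcal{C}$ is nondecreasing in $c$ on the interval, so moving $c$ down to the left endpoint of its interval — which is an element of $\{0\} \cup C^L \cup \Omega_t$ — does not increase the cost. One must be slightly careful at the endpoints because the price curves are step functions: I would use the convention implicit in the model (the steps of $\pi^L$ are indexed by breakpoints in $C^L$, and $\Omega^-(c)$ uses $x_\omega \leq c$, so the scenario equal to $c$ is counted on the lower side) to argue the left endpoint value is well-defined and no worse. This shows every non-special $c$ is weakly dominated by a special $c$, so there is always an optimal booked capacity in $S_t$; finiteness and discreteness of $S_t$ follow immediately since $C^L$ and $\Omega_t$ are finite.

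The main obstacle I anticipate is handling the boundary behaviour cleanly: at a point $c^L_j \in C^L$ or at a scenario value $x_\omega$, the function $\mathcal{C}$ may jump (downward or upward), and one must confirm that the left endpoint of each interval — rather than some limiting value not attained — is the right representative, and that the case $K = 0$ (where the affine pieces are flat and a whole interval could be optimal) still leaves an optimal point inside $S_t$. A secondary subtlety is ensuring $C^H$ need not be added to $S_t$: I would argue that a breakpoint of $\pi^H$ only matters when it is crossed while $c$ is in the region where $\Omega^+(c) \neq \emptyset$, but since increasing $\pi^H(c)$ (it is step-wise increasing) only raises the cost on $\Omega^+(c)$ and we are always pushing $c$ leftward/downward, the higher-price breakpoints never create a new local optimum not already captured by $C^L$ or $\Omega_t$; alternatively, one observes that between consecutive elements of $\{0\}\cup C^L \cup \Omega_t$ the value $\pi^H(c)$ is constant because the pricing design ties $C^H$ to $C^L$, which the affine-reduction argument above already uses.
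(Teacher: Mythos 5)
Your strategy is in substance the paper's own: between breakpoints the cost is affine in $c$ with positive slope $K$, so only breakpoints can be optimal, and the breakpoints of the higher tariff must then be excluded. The one statement that is not right as written is the claim that on an open interval between consecutive elements of $\{0\}\cup C^L\cup\Omega_t$ both $\pi^L(c)$ and $\pi^H(c)$ are constant: in this model $C^H$ is an independent set of breakpoints, not derived from $C^L$, so a step of $\pi^H$ can fall strictly inside such an interval and the cost is then not affine there. For the same reason your second fallback (``the pricing design ties $C^H$ to $C^L$'') is unavailable. Your first fallback, however, is exactly the missing piece and is precisely what the paper proves with its $\varepsilon$-argument: for $\bar c\in C^H$ with no element of $\{0\}\cup C^L\cup\Omega_t$ nearby, crossing $\bar c$ rightward changes the cost by $2\varepsilon K+\sum_{\omega\in\Omega^+(\bar c)}\left(\pi^H_{n+1}-\pi^H_n\right)x_\omega>0$, so such a point cannot be an optimality candidate, and moving $c$ leftward across $C^H$ breakpoints never increases the cost. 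With that repair the function on each interval between consecutive points of $S_t$ is nondecreasing (affine pieces of slope $K\geq 0$ separated by upward jumps at interior $C^H$ points), the infimum is taken at the left endpoint, and your boundary conventions at points of $C^L$ and at scenario values $x_\omega$ (which the paper also leaves implicit) make that endpoint value attained; the $K=0$ remark is harmless since an endpoint of $S_t$ remains optimal. So: correct provided you rely on the monotone-$\pi^H$ argument rather than the constancy claim, and with that choice your proof coincides with the paper's.
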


\begin{proof}
The user objective function is the sum of the booking cost and the expected
electricity cost. The booking cost is linear in the booked capacity,
with a positive slope equal to the booking fee.
The expected electricity cost is piecewise constant in the
booked capacity, with discontinuities at steps of both of
the price curves because of the $\pi^L$ and $\pi^H$ prices and at
possible load levels because of the transfer of a load from
$\Omega^+$ to the $\Omega^-$ set. This can be highlighted using the
indicator functions associated with each of the two sets:
\begin{align}
&\mathbbm{1}^-(\omega,c) = \begin{cases}
1, & \mbox{if $x_{\omega} \leq c$,} \\ 0 & \mbox{otherwise.}
\end{cases}\\
&\mathbbm{1}^+(\omega,c) = 1 - \mathbbm{1}^-(\omega,c)
\end{align}
The expression of the user expected cost becomes:
\begin{multline}\label{eq:indicator-expected}
\mathcal{C}(c) =  K \cdot c + \\ \sum_{\omega \in \Omega} x_{\omega} \cdot p_{\omega} \cdot \left(\pi^L(c) \cdot \mathbbm{1}^-(\omega,c) + \pi^H(c) \cdot \mathbbm{1}^+(\omega,c) \right)
\end{multline}

The sum of the two terms is therefore piecewise linear with a
positive slope. On any interval between the discontinuity points,
the optimal value lies on the lower bound, which can be any point
of $C^L$, $C^H$, $\Omega$ or 0.\\

Furthermore, let $\mathcal{C}(c)$ be the user cost for a booked
capacity and $\bar{c}$ such that $\bar{c} \in C^H$ and
$\bar{c} \notin \{0\} \cup C^L \cup \Omega$.
The higher tariff levels are monotonically increasing. Let
$\varepsilon > 0 $ be sufficiently small such that:
\begin{align*}
\pi^H(\bar{c} - \varepsilon) &= \pi_{n}, \\
\pi^H(\bar{c} + \varepsilon) &= \pi_{n+1} > \pi_{n}, \\
\pi^L(\bar{c} - \varepsilon) &= \pi^L(\bar{c} + \varepsilon) = \pi^L_{m}, \\
\nexists x_{\omega},\ \omega \in &\Omega \text{ s.t. } \bar{c} - \varepsilon \leq x_{\omega} \leq \bar{c} + \varepsilon.
\end{align*}
The last condition guarantees there is no load value in the
$\left[\bar{c} - \varepsilon, \bar{c} + \varepsilon\right]$ interval and can also be expressed in terms of the two load sets split by the capacity:
\begin{gather*}
\Omega^+(\bar{c} - \varepsilon) = \Omega^+(\bar{c} + \varepsilon)\ \text{and}\ \
\Omega^-(\bar{c} - \varepsilon) = \Omega^-(\bar{c} + \varepsilon).
\end{gather*}
Then if such $\varepsilon$ exists, we find that:
\begin{align*}
&\mathcal{C}(\bar{c} - \varepsilon) = K \cdot (\bar{c} - \varepsilon) + \sum_{\omega \in \Omega^-(\bar{c})} \pi^L_m \cdot x_{\omega} + \sum_{x \in \Omega^+(\bar{c})} \pi^H_{n} \cdot x_{\omega},  \\
&\mathcal{C}(\bar{c} + \varepsilon) = K \cdot (\bar{c} + \varepsilon) + \sum_{\omega \in \Omega^-(\bar{c})} \pi^L_m \cdot x_{\omega} + \sum_{\omega \in \Omega^+(\bar{c})} \pi^H_{n+1} \cdot x_{\omega},  \\
&\mathcal{C}(\bar{c} + \varepsilon) - \mathcal{C}(\bar{c} - \varepsilon) = 2 \varepsilon K + \sum_{\omega \in \Omega^+(\bar{c})} (\pi^H_{n+1}-\pi^H_n) \cdot x_{\omega}, \\
&\mathcal{C}(\bar{c} + \varepsilon) - \mathcal{C}(\bar{c} - \varepsilon) > 0.
\end{align*}

The discontinuity on any $x \in C^H$ is therefore always positive
and cannot be a candidate for optimality. It follows that optimality candidates
are restricted to the set $S = \{0\} \cup C^L \cup \Omega$.
\end{proof}

Proposition \ref{prop:subset} means we can replace the continuous decision
set of capacities with a discrete set that can be enumerated.\\

The guarantee of an upper bound on the consumption $\mathcal{G}$ corresponds to the
incentive given to the user against consuming above the considered
capacity. It is the second objective, given by the difference in cost at
the capacity, which is the immediate difference in total cost at the
transition from lower to higher tariff:
\begin{equation}
\mathcal{G}(c, \pi^L, \pi^H) = c_t \cdot \left(\pi^H(c) - \pi^L(c)\right).
\end{equation}

The supplier needs to include the user behavior and optimal reaction in their
decision-making process, which can be done by a bilevel constraint:
\begin{equation}
c_t \in arg\min_{c} \mathcal{C}(c).
\end{equation}

The user thus books the least-cost option at each time frame, given the
corresponding probability distribution. Given the finite set of optimal
candidates $S_t$ defined in \ref{prop:subset}, this constraint can be
re-written as:
\begin{equation}
\mathcal{C}(c_t) \leq \mathcal{C}(c) \,\, \forall c \in S_t.
\end{equation}

If multiple choices of $c$ yield the minimum cost, the choice of
the user is not well-defined. The supplier would want to ensure the uniqueness
of the preferred solution by making it lower than the expected cost of any other
capacity candidate by a fixed quantity $\delta > 0$. This quantity can be
interpreted as the conservativeness of the user (unwillingness to move to an
optimal solution up to a difference of $\delta$). It is a parameter of the
decision-making process, estimated by the supplier. The lower-level optimality
constraint is then for a preferred candidate $k$:
\begin{equation}\label{eq:lowerineq}
\mathcal{C}(c_{tk}, K, \pi^L,\pi^H) \leq \mathcal{C}(c_{tl}, K, \pi^L,\pi^H) - \delta \,\,\forall l \in S_t \backslash k.
\end{equation}

\subsection{Additional contractual constraints}

In order to obtain regular price steps, the contract between supplier and
consumer can include further constraints on the space of pricing parameters.
We include three types of constraints: lower and upper bounds on the booking
fee $K$, minimum and maximum increase at each step of the higher price and
minimum and maximum decrease at each step of the higher price.
All these can be expressed as linear constraints, and we
gather them under the constraint set:
\begin{equation}
(K,\pi^L,\pi^H) \in \Phi.
\end{equation}

\subsection{Complete optimization model}

The model is defined for each of the capacity candidates and is thus noted
$\mathcal{P}_{tk}$ for candidate $k$ and time frame $t$:

\begin{align}
\max_{K,\pi^L,\pi^H} & \left(\mathcal{C}(c_{tk}, K, \pi^L, \pi^H), \mathcal{G}(c_{tk}, \pi^L, \pi^H)\right)\\
& (K,\pi^L,\pi^H) \in \Phi \\
& \mathcal{C}(c_{tk}, K, \pi^L, \pi^H) \leq \mathcal{C}(c_{lt}, K, \pi^L, \pi^H) - \delta \,\, \forall l \in S_t \backslash k,
\end{align}

\noindent where
\begin{align}
\mathcal{C}(c_{tk}, K, \pi^L, \pi^H) &= \nonumber\\
K\cdot c_{tk} &+ \sum_{\omega \in \Omega^-_t(c_{tk})} x_{\omega} p{\omega} \pi^L_k + \sum_{\omega \in \Omega^+_t(c_{tk})} x_{\omega} p_{\omega} \pi^H_k, \\
\mathcal{G}(c_{tk}, \pi^L, \pi^H) &= c_{tk} \cdot \left(\pi^H(c) - \pi^L(c)\right).
\end{align}

\section{Solution method and computational experiments}

The proposed model was implemented and tested
using historical consumption data\cite{ucirepo} measured on a pilot house.
The instantaneous consumption was measured
every two minutes during 47 months on a residential
building by the energy supplier and grid operator EDF\cite{consumptiondata}.
Since the focus is the energy consumed
within a given time frame, the instantaneous power can be
averaged for each hour, yielding the energy in $kW\cdot h$ and
avoiding issues of missing measurements in the dataset.
Data preprocessing, density estimation and discretization, and visualization
were performed using Julia \cite{julia101}, matplotlib \cite{Hunter:2007}
and \textit{KernelDensity.jl} \cite{kerneldensity}.
The construction and optimization of the model were carried out using CLP from
the COIN-OR project \cite{coinpaper} as a linear solver and JuMP
\cite{DunningHuchetteLubin2017}.
For all experiments where it is not specified, an inertia of $\delta=0.05$ has been applied.
For every time frame and for all capacity candidates, the bi-objective supplier
problem with objectives $(\mathcal{C}, \mathcal{G})$ is solved with the
$\varepsilon$-constraint method implemented in
\textit{MultiJuMP.jl}\cite{asbjorn_nilsen_riseth_2019_2565839}.
In all cases, the objectives are found to be non-conflicting, in the sense
that the utopia point of the multi-objective problem is feasible and reached.
This implies that a lexicographic multi-objective optimization solves the problem
and reaches the utopia point, but does not guarantee that this holds for all
problem configurations.
\\

Figure \ref{fig:numoptions} shows the number of options computed at each
hourly time frame and Figure \ref{fig:levoptions} the capacity level in
$kW\cdot h$ of each option. The option of a capacity level of 0 is always possible.
Two examples of TLOU settings obtained are presented in Figures
\ref{fig:solution14} and \ref{fig:solution18}.

\begin{figure}[ht]
\begin{minipage}{0.49\textwidth}
    \centering
    \includegraphics[width=1.1\textwidth]{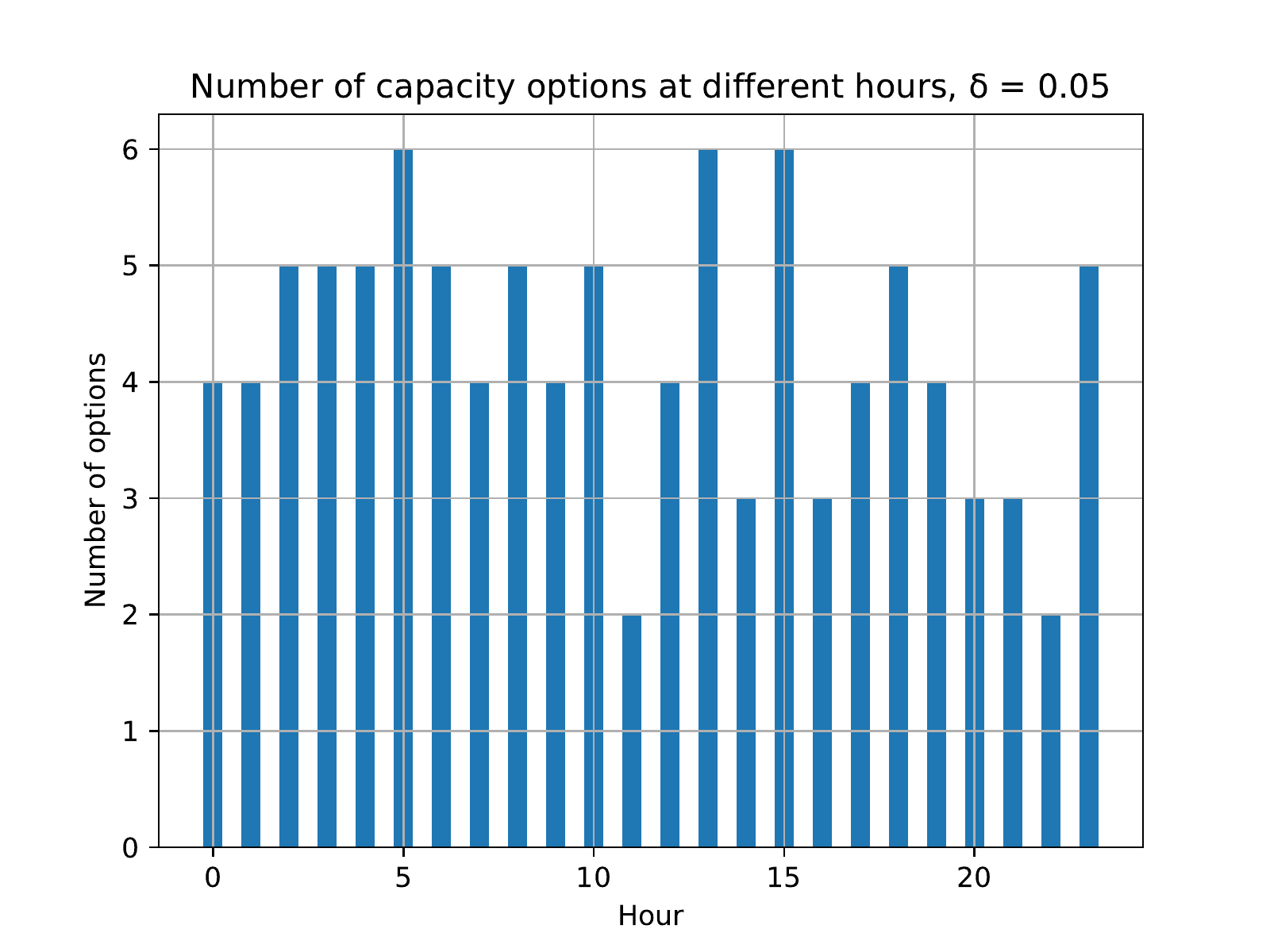}
\caption{Number of options at different hours}
\label{fig:numoptions}
\end{minipage}
\begin{minipage}{0.49\textwidth}
    \centering
    \includegraphics[width=1.1\textwidth]{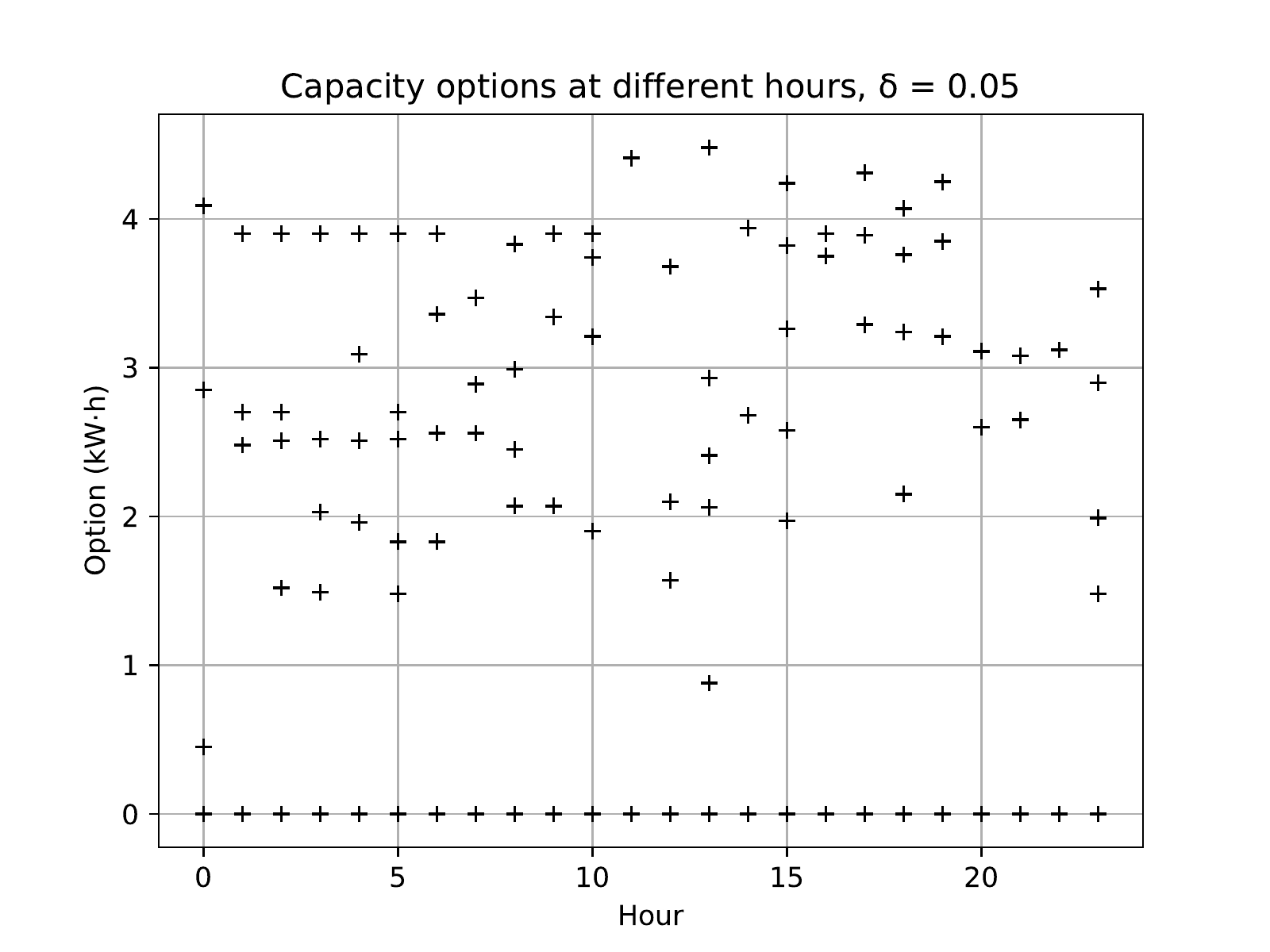}
    \caption{Capacity levels of options at different hours}\label{fig:levoptions}
\end{minipage}
\end{figure}

\begin{figure}[ht]
\begin{minipage}{0.47\textwidth}
    \centering
    \includegraphics[width=1.05\textwidth]{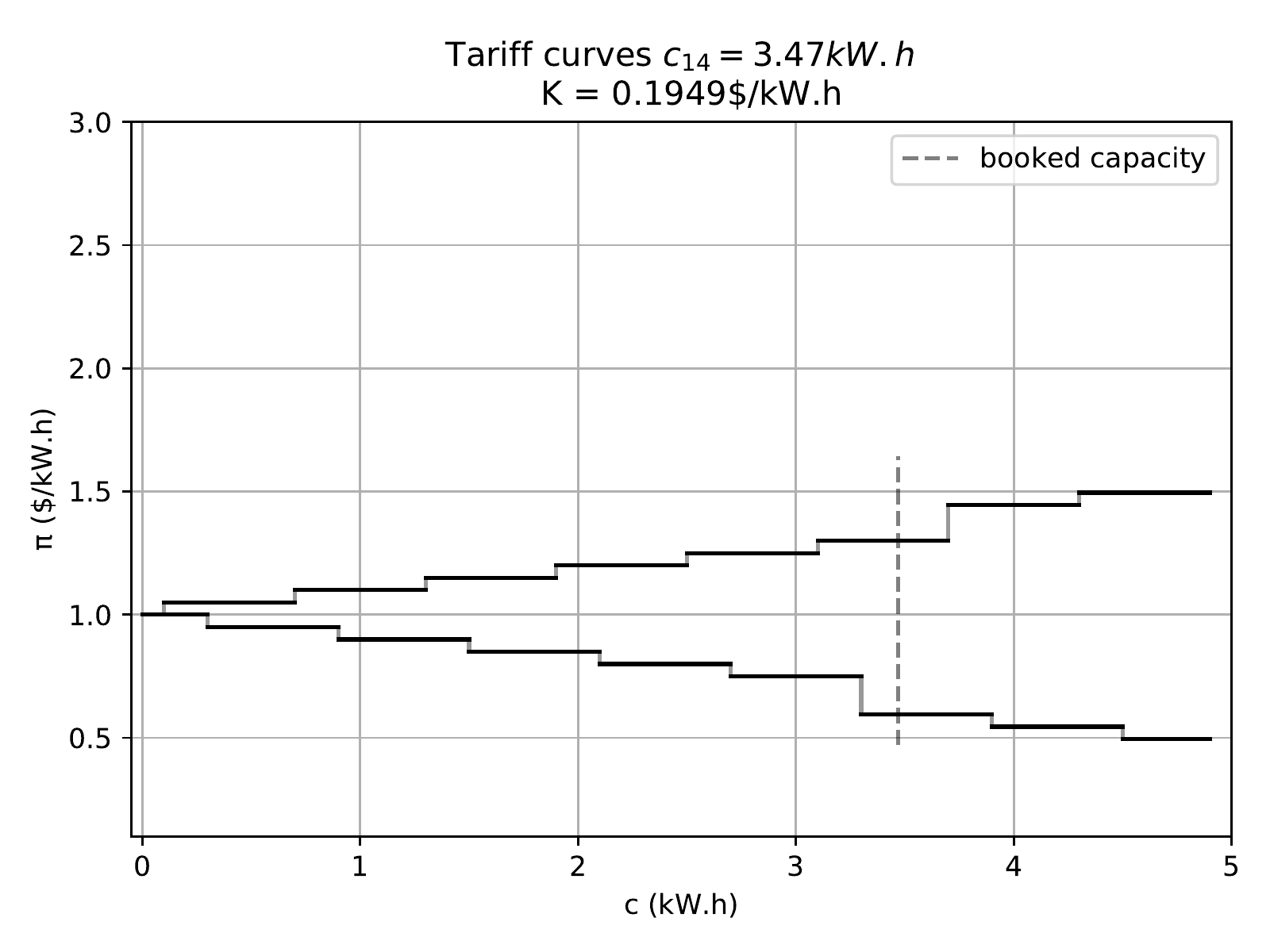}
\caption{TLOU price settings to incentive for capacity $c=3.47kW\cdot h$}
\label{fig:solution14}
\end{minipage}
\begin{minipage}{0.47\textwidth}
    \centering
    \includegraphics[width=1.05\textwidth]{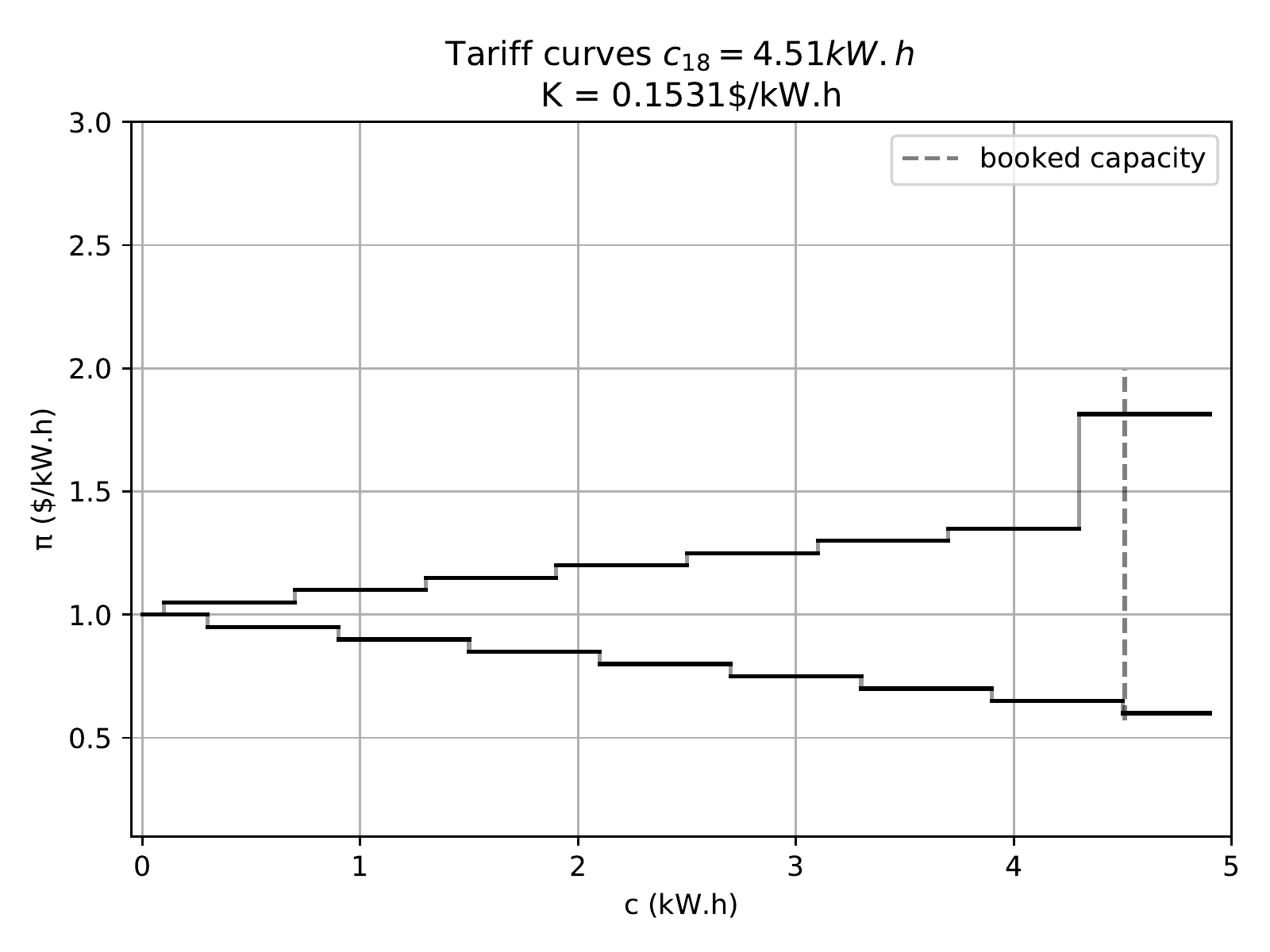}
    \caption{TLOU price settings to incentive for capacity $c=4.51kW\cdot h$}
		\label{fig:solution18}
\end{minipage}
\end{figure}

The expected cost for the user of booking any capacity, given the price setting
provided in Figure \ref{fig:solution14} is shown Figure \ref{fig:exp_cost14}.
The most notable result is that in
all cases tested, the utopia point, defined as the optimal value of the two
objectives optimized separately, is reachable.
This result is conceivable given that the supplier decision is taken in a
high-level space, allowing multiple solutions to be optimal with respect to the
revenue. In order to ensure the guarantee-maximizing optimal solution, a two-step
process lexicographic multi-optimization procedure is used:
\begin{enumerate}
	\item Solve the revenue-maximizing problem to obtain the maximal reachable revenue $v$.
	\item Solve the guarantee-maximizing problem, while constraining a revenue $\mathcal{C}(\cdot) \geq v$.
\end{enumerate}

Figures \ref{fig:base_solution1} and \ref{fig:compare_solution2} present a
TLOU configuration optimized for costs only and for cost and guarantee.

\begin{figure}[ht]
\centering
\includegraphics[width=0.49\textwidth]{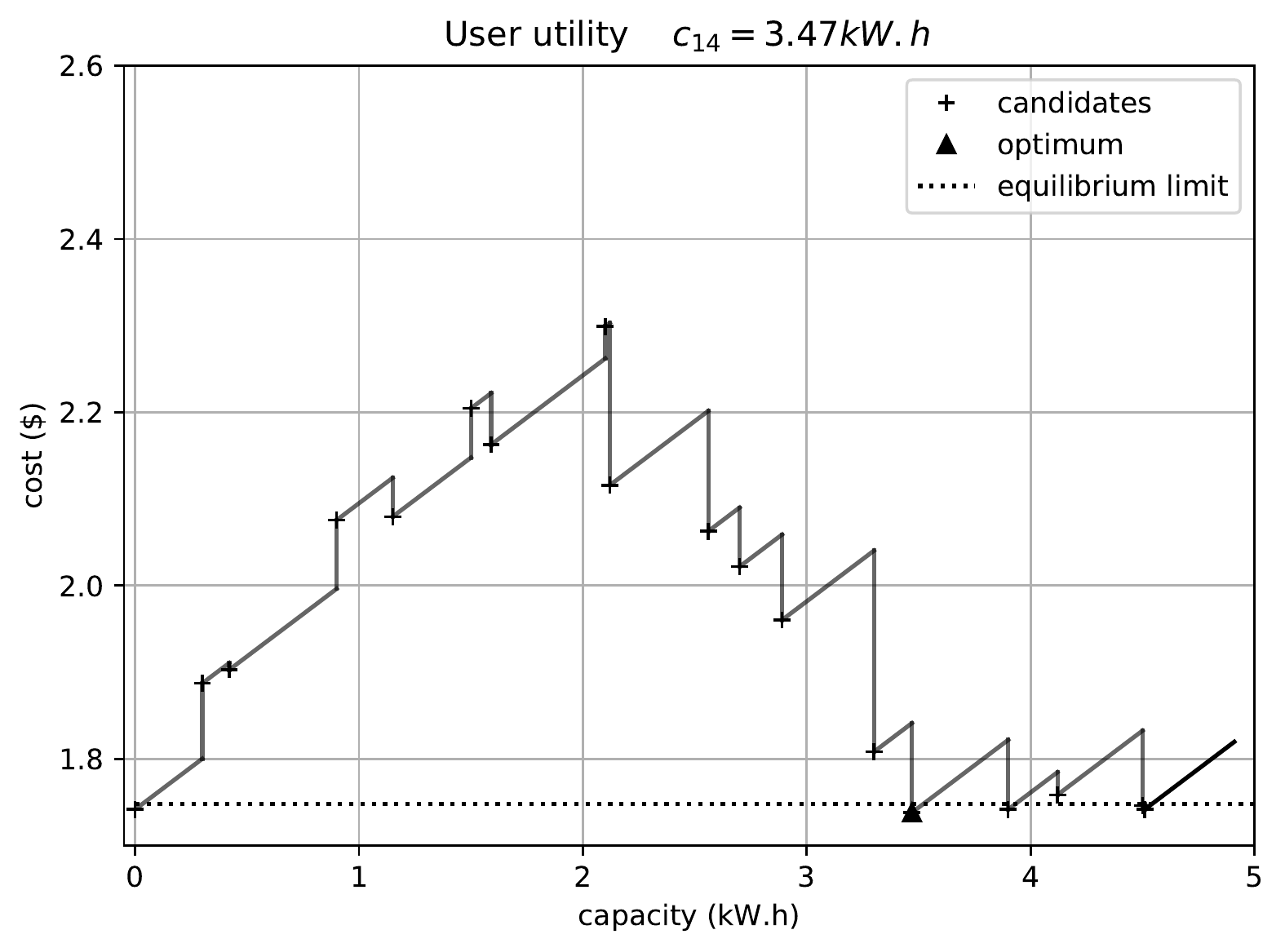}
\caption{Expected cost for any capacity booked under the price settings of Figure \ref{fig:solution14}}
\label{fig:exp_cost14}
\end{figure}
\begin{figure}[ht]
\begin{minipage}{0.495\textwidth}
    \centering
    \includegraphics[width=1.05\textwidth]{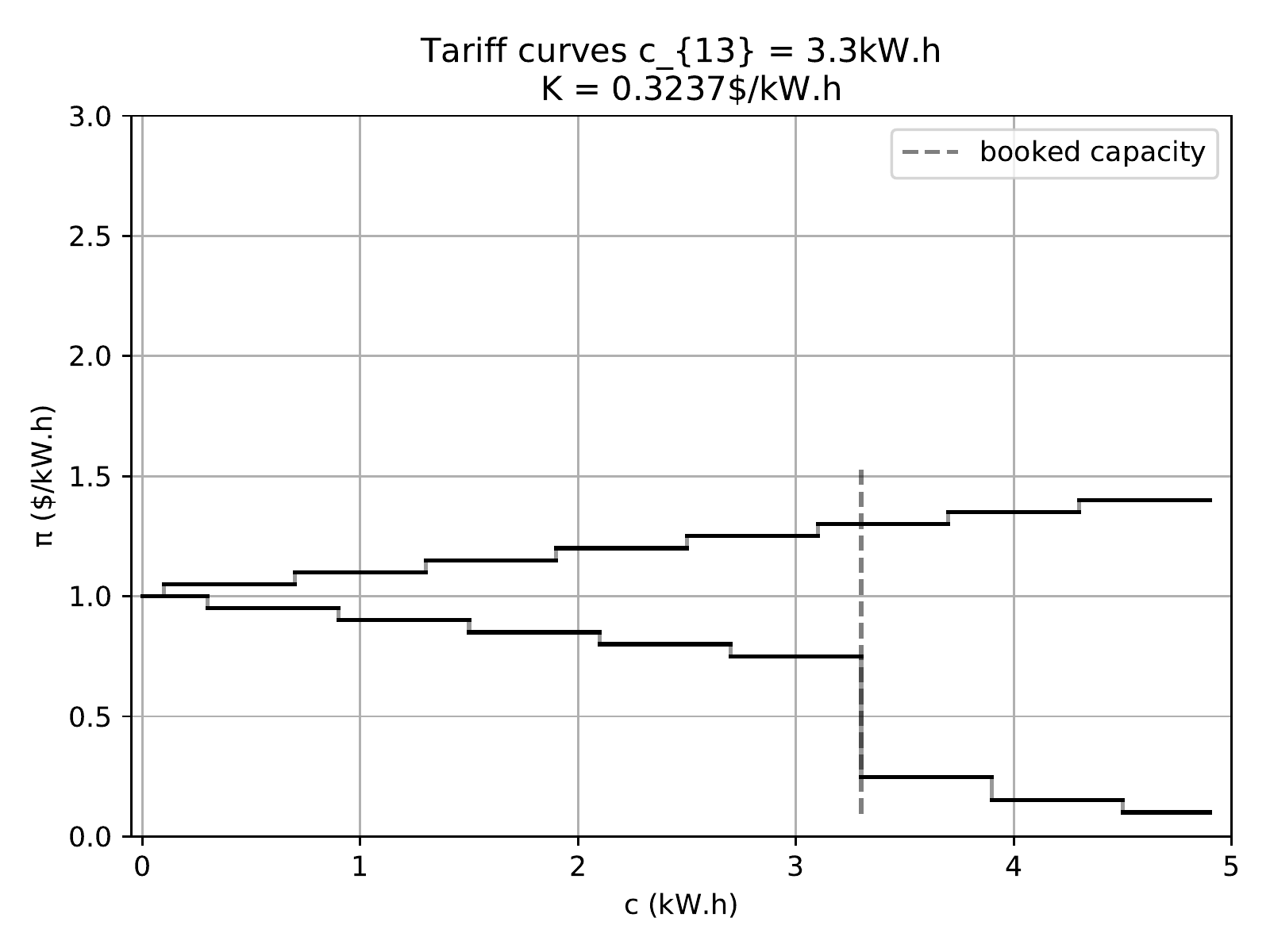}
\caption{TLOU price settings for revenue maximization only}
\label{fig:base_solution1}
\end{minipage}
\begin{minipage}{0.495\textwidth}
    \centering
    \includegraphics[width=1.05\textwidth]{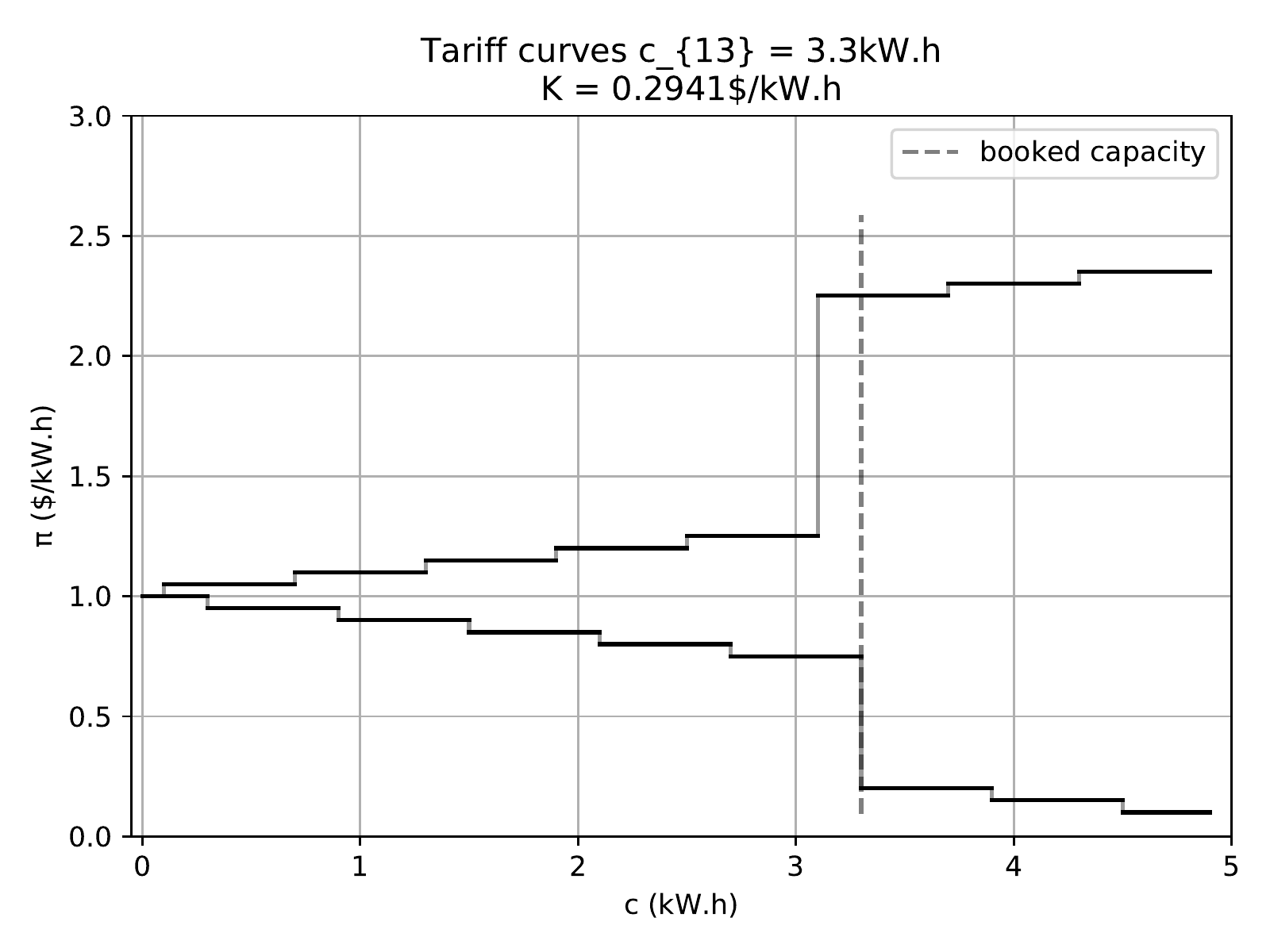}
    \caption{TLOU price settings after lexicographic optimization}
		\label{fig:compare_solution2}
\end{minipage}
\end{figure}

All the models solved are linear optimization problems with a fixed number of variables
and a number of constraints growing linearly with the number of scenarios
considered.
However, all these constraints are of type:
\begin{equation}
\mathcal{C}(c_{tk}, K, \pi^L,\pi^H) \leq \mathcal{C}(c_{tl}, K, \pi^L,\pi^H) - \delta \,\,\forall l \in S_t \backslash k.
\end{equation}

\noindent This is equivalent to:
\begin{equation}
\mathcal{C}(c_{tk}, K, \pi^L,\pi^H) \leq \min_{l \in S_t \backslash k} \mathcal{C}(c_{tl}, K, \pi^L,\pi^H) - \delta.
\end{equation}

\noindent
Therefore, at most one of the $l$ will be active with a non-zero dual cost.
The method can thus be scaled to a greater number of scenarios by adding these
constraints on the fly.\\

With the current discretized distributions containing between 5 and
10 scenarios, the mean and median times to compute the whole solutions
for all candidate capacities
are below $\frac{1}{40}$ of a second.
These metrics are obtained using the BenchmarkTools.jl package\cite{bmtools}.\\

A study of the influence of the $\delta$ parameter is summarized Figure \ref{fig:deltavar}.
For any hour, there always exists a maximum value $\delta_{max}$ above which it
becomes impossible to make a solution better for the lower-level than the
baseline with a difference greater than $\delta_{max}$.

\begin{figure}[ht]
\centering
\includegraphics[width=0.49\textwidth]{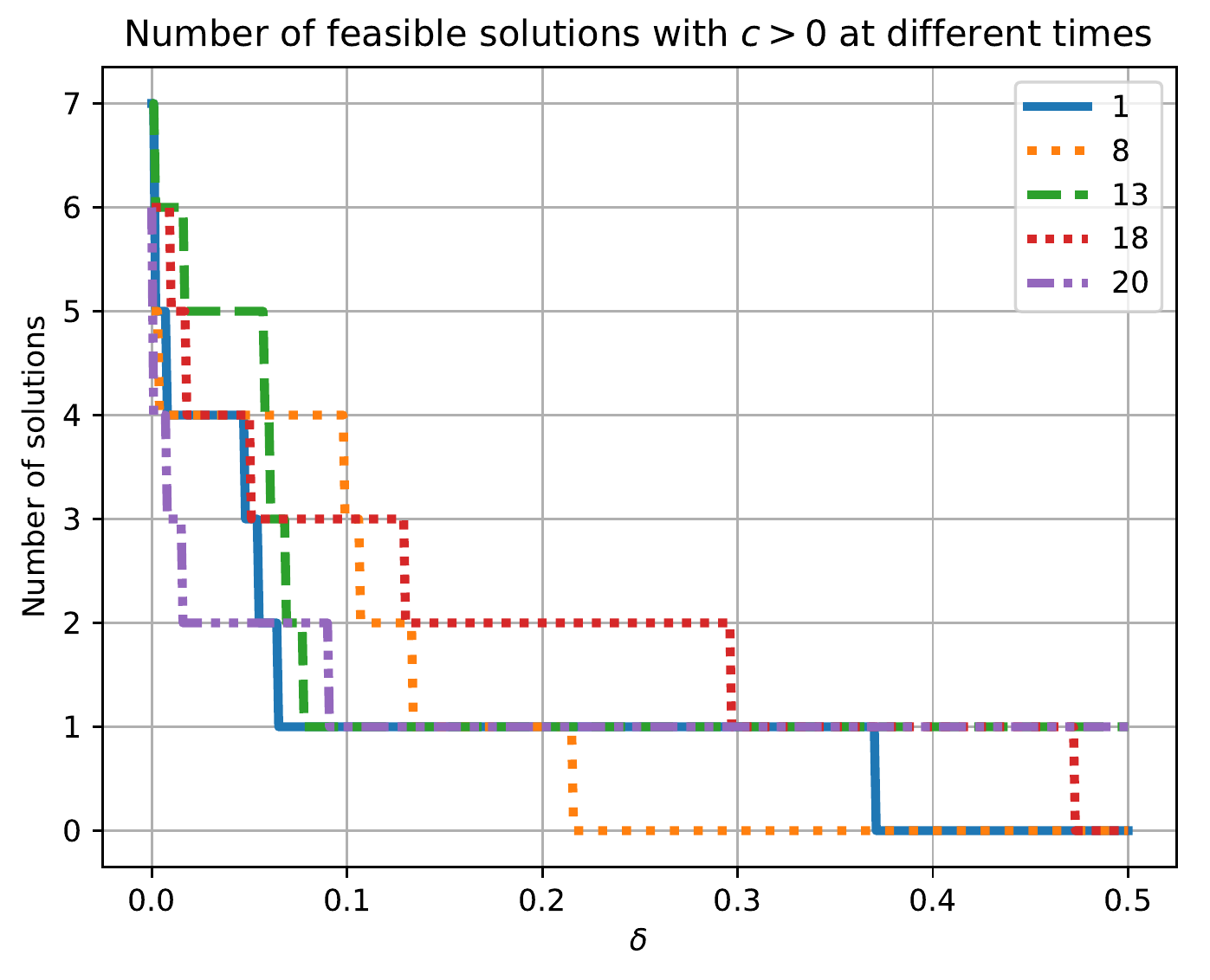}
\caption{Number of non-zero capacity solutions}
\label{fig:deltavar}
\end{figure}

\section{Conclusion}

TLOU is designed to price energy across time and to reflect varying
costs and requirements from the generation side.
Defining two objectives for the supplier,
we built the set of cost-optimal price settings maximizing the guarantee
in a lexicographic fashion. Computations on distributions built from real data
show the effectiveness of the method, requiring a low runtime to compute
the set of solutions. Future research will consider continuous probability
distributions of the consumption and the price-setting problem with multiple
users.

\section{Acknowledgment}

This work was supported by the NSERC Energy Storage Technology (NEST) Network.

\bibliography{refs}{}
\bibliographystyle{nature}
\end{document}